\newcommand{\bydef}{:=}
\newcommand{\id}{\mathrm{id}}
\newcommand{\cA}{\mathcal{A}}
\newcommand{\cB}{\mathcal{B}}
\newcommand{\cH}{\mathcal{H}} 
\newcommand{\cI}{\mathcal{I}}
\newcommand{\cM}{\mathcal{M}}
\newcommand{\cV}{\mathcal{V}}
\newcommand{\NN}{\mathbb{N}}
\newcommand{\QQ}{\mathbb{Q}}
\newcommand{\FF}{\mathbb{F}} 
\DeclareMathOperator{\Mod}{\mathrm{Mod}}
\DeclareMathOperator{\Grp}{\mathrm{Grp}}
\DeclareMathOperator{\AlgF}{\mathrm{Alg_{\FF}}}
\DeclareMathOperator{\AlgR}{\mathrm{Alg_{R}}}
\DeclareMathOperator{\Hom}{\mathrm{Hom}}
\DeclareMathOperator{\Alg}{\mathrm{Alg}}
\DeclareMathOperator{\Aut}{\mathrm{Aut}}
\DeclareMathOperator{\Stab}{\mathrm{Stab}}
\DeclareMathOperator{\Diag}{\mathrm{Diag}}
\DeclareMathOperator{\supp}{\mathrm{Supp}}
\DeclareMathOperator{\Cent}{\mathrm{Cent}}
\DeclareMathOperator{\Norm}{\mathrm{Norm}}
\DeclareMathOperator{\Sym}{\mathrm{Sym}}
\newcommand{\GL}{\mathrm{GL}}
\newcommand{\Auts}{\mathbf{Aut}}
\newcommand{\Cents}{\mathbf{Cent}}
\newcommand{\Norms}{\mathbf{Norm}}
\newcommand{\Gs}{\mathbf{G}}
\newcommand{\Hs}{\mathbf{H}}
\newcommand{\Ns}{\mathbf{N}}
\newcommand{\Ds}{\mathbf{D}}
\newcommand{\GLs}{\mathbf{GL}}
\newcommand{\Diags}{\mathbf{Diag}}
\newcommand{\bmu}{\boldsymbol{\mu}}
\newcommand{\Stabs}{\mathbf{Stab}}
\newcommand{\Csf}{\mathsf{C}}
\newcommand{\Wsf}{\mathsf{W}}
\newcommand{\Symsf}{\mathsf{Sym}}
\newtheorem{theorem}{Theorem}[section]
\newtheorem{proposition}[theorem]{Proposition}
\newtheorem{corollary}[theorem]{Corollary}
\theoremstyle{definition} 
\newtheorem{definition}[theorem]{Definition}
\newtheorem{example}[theorem]{Example}
\theoremstyle{remark} \newtheorem{remark}[theorem]{Remark}
\numberwithin{equation}{section}
\begin{document}

\title
{Automorphism group schemes and Weyl groups of gradings}

\author[A.~Elduque]{Alberto Elduque} 
\address{Departamento de
Matem\'{a}ticas e Instituto Universitario de Matem\'aticas y
Aplicaciones, Universidad de Zaragoza, 50009 Zaragoza, Spain}
\email{elduque@unizar.es} 
\thanks{Supported by grant
PID2021-123461NB-C21, funded by 
MCIN/AEI/10.13039/ 501100011033 and by
 ``ERDF A way of making Europe''; and by grant 
E22\_20R (Gobierno de Arag\'on).}

\subjclass[2020]{Primary 17A36; Secondary 17B70}

\keywords{Grading, diagonal group, stabilizer, automorphism group, normalizer, Weyl group.}


\begin{abstract}
A scheme theoretic version of the automorphism group of a grading on an algebra is presented, and 
the classical result that shows that, over algebraically closed fields of characteristic $0$, 
the automorphism group
of a grading is the normalizer of its diagonal group is extended, over arbitrary fields,
to this scheme setting.

The quotient of the scheme theoretic versions of the automorphism group and the stabilizer of a grading
turns out to be a constant group scheme, called the Weyl group scheme of the grading. For algebraically
closed fields this is the constant group scheme associated to the ordinary Weyl group of the grading,
but this fails over arbitrary fields.
\end{abstract}

\maketitle

\begin{center}
\textit{Dedicated to professor Leonid Makar-Limanov, on the occasion of his 80th birthday}
\end{center}

\bigskip

\section{Introduction}

Let $\cA$ be a nonassociative (i.e., not necessarily associative) algebra over a field $\FF$ and let $G$ be an abelian group. A \emph{grading} on $\cA$ by $G$, or \emph{$G$-grading}, is a vector space decomposition
\begin{equation}\label{eq:Gamma}
\Gamma: \cA=\bigoplus_{g\in G}\cA_g,
\end{equation}
satisfying $\cA_g\cA_h\subseteq \cA_{gh}$ for all $g,h\in G$. In this case, the nonzero elements in 
$\cA_g$ are said to be \emph{homogeneous of degree} $g$ and we write $\deg x=g$ for 
$0\neq x\in\cA_g$. 
The subspace $\cA_g$ is the \emph{homogeneous component} of degree $g$.

The set
\[
\supp(\Gamma)\bydef \{g\in G \mid \cA_g\ne 0\}
\]
is called the \emph{support} of $\Gamma$.

There are some natural subgroups of the group $\Aut(\cA)$ of automorphisms of $\cA$  attached to 
$\Gamma$ (see \cite{PZ89} and \cite[Chapter 1]{EKmon}). The first one is the \emph{stabilizer of the grading}:
\[
\Stab(\Gamma)\bydef\{\varphi\in\Aut(\cA)\mid \varphi(\cA_g)\subseteq\cA_g\ \forall g\in G\}.
\]
Another one is the \emph{automorphism group of the grading}:
\[
\Aut(\Gamma)\bydef\{\varphi\in\Aut(\cA)\mid \forall g\in G\ \exists h\in G\text{ such that }
\varphi(\cA_g)\subseteq \cA_h\}.
\]
Each $\varphi\in\Aut(\Gamma)$ induces a permutation of $\supp(\Gamma)$, and this gives an exact 
sequence
\[
\begin{tikzcd}
1\arrow[r, rightarrow] & \Stab(\Gamma) \arrow[r, hookrightarrow] 
&\Aut(\Gamma) \arrow[r, rightarrow]& \Sym\bigl(\supp(\Gamma)\bigr).
\end{tikzcd}
\]
The quotient
\[
W(\Gamma)\bydef \Aut(\Gamma)/\Stab(\Gamma)
\]
is called the \emph{Weyl group} of the grading and is (isomorphic to) a subgroup of the symmetric 
group $\Sym\bigl(\supp(\Gamma)\bigr)$. The Weyl group measures how much symmetric a grading is.

\smallskip

Assume for a while that the ground field $\FF$ is algebraically closed of characteristic $0$, 
and that $\cA$ is finite dimensional. Assume too, without loss of generality, that $G$ is finitely
generated, otherwise substitute $G$ by the subgroup generated by $\supp(\Gamma)$.
Consider the group of characters $\widehat{G}=\Hom(G,\FF^\times)$
(group homomorphisms). The grading $\Gamma$ induces a homomorphism (of algebraic groups)
\begin{equation}\label{eq:rhoG}
\eta_\Gamma\colon \widehat{G}\longrightarrow \Aut(\cA),
\end{equation}
that takes any character $\chi\in\widehat{G}$ to the automorphism $\tau_\chi\in\Aut(\cA)$ given by:
\begin{equation}\label{eq:tauchi}
\tau_\chi\colon x\in \cA_g\longmapsto \chi(g)x\quad \forall g\in G.
\end{equation}
Since characters separate elements of $G$, the homogeneous components of $\Gamma$ can be
retrieved from $\eta_\Gamma$: for all $g\in G$ we have
\[
\cA_g=\{x\in\cA\mid \tau_\chi(x)=\chi(g)x\ \forall \chi\in\widehat{G}\}.
\]

The image of $\eta_\Gamma$ is contained in the \emph{diagonal group} of $\Gamma$:
\begin{equation}\label{eq:Diag}
\Diag(\Gamma)\bydef\{ \varphi\in \Stab(\Gamma)\mid \forall g\in G\ \exists \mu_g\in\FF^\times
\text{ such that }\varphi\vert_{\cA_g}=\mu_g\id\}.
\end{equation}
One obtains easily the following equalities (see \cite[(1.6),(1.7)]{EKmon}):
\begin{equation}\label{eq:StabAutG_CentNorm}
\Stab(\Gamma)=\Cent_{\Aut(\cA)}\bigl(\Diag(\Gamma)\bigr),\quad
\Aut(\Gamma)=\Norm_{\Aut(\cA)}\bigl(\Diag(\Gamma)\bigr).
\end{equation}
That is the stabilizer (respectively, automorphism group) of $\Gamma$ is the centralizer
(resp., normalizer) in $\Aut(\cA)$ of the diagonal group.

\medskip

For arbitrary $\FF$, and keeping our hypothesis of finite dimensionality of $\cA$, the group of automorphisms $\Aut(\cA)$ must be substituted by the
\emph{affine group scheme of automorphisms} $\Auts(\cA)$.

The functorial point of view of affine group schemes, as considered in \cite{Waterhouse}, 
\cite[Chapter 6]{KMRT}, or \cite{Milne}, will be followed. The reader is referred to these references for
their basic facts.

Therefore, an affine group scheme is a representable functor from the category $\AlgF$ of unital
commutative associative algebras over $\FF$ to the category $\Grp$ of groups. In particular,
$\Auts(\cA)$ acts as follows on objects:
\[
\Auts(\cA)\colon R\mapsto \Aut_{\AlgR}(\cA\otimes_\FF R)
\]
(group of automorphisms of $\cA\otimes_\FF R$ as an algebra over $R$). The action on homomorphisms is
the natural one by `scalar extension'.

Given our grading $\Gamma$ in \eqref{eq:Gamma}, there is an induced homomorphism of affine
group schemes extending equation \eqref{eq:rhoG} (see \cite[\S1.4]{EKmon}):
\begin{equation}\label{eq:etaG}
\eta_\Gamma\colon \Ds(G)\longrightarrow \Auts(\cA),
\end{equation}
where $\Ds(G)$ is the diagonalizable group scheme represented by the group algebra $\FF G$. 
Hence, for any $R$ in $\AlgF$, the group $\Ds(G)(R)$ consists of the algebra homomorphisms
$\FF G\rightarrow R$, which can be identified with the group homomorphisms $G\rightarrow R^\times$, 
that is, with the characters of $G$ with values in $R^\times$. The homomorphism $\eta_\Gamma$
works as before: given a character $\chi\colon G\rightarrow R^\times$, the associated automorphism of
$\cA\otimes_\FF R$ is given by the same formula as in \eqref{eq:tauchi}: 
\[
\tau_\chi(x)=\chi(g)x
\]
for all $x\in (\cA\otimes_\FF R)_g\bydef \cA_g\otimes_\FF R$.

In this new setting, the grading $\Gamma$ is recovered again from $\eta_\Gamma$, and actually in a very
easy way. Take $R=\FF G$, and let $\chi\in \Ds(G)(\FF G)$ be the \emph{generic} element, that is,
the identity homomorphism $\FF G\rightarrow \FF G$, identified with the group
homomorphism $G\rightarrow (\FF G)^\times$ that takes any $g\in G$ to the element $g\in\FF G$.
Then for any $x\in\cA_g$, $\tau_\chi(x\otimes 1)=x\otimes g$, and $\cA_g$ consists of the 
`eigenvectors' of `eigenvalue'  $g$ (computed inside  $\cA\otimes_\FF \FF G$) of $\tau_\chi$.

The image of $\eta_\Gamma$ is now contained in the \emph{diagonal group scheme} of $\Gamma$
(see \cite[p.~23]{EKmon}), which the reader should compare 
with \eqref{eq:Diag}):
\[
\Diags(\Gamma)\colon R\longmapsto \{\varphi\in \Aut_{\AlgR}(\cA\otimes_\FF R)\mid 
\forall g\in G\ \varphi\vert_{\cA_g\otimes_\FF R}\in R^\times\id\}.
\]

We are now ready to explain what this note is about.
\bigskip

\emph{The goal is to consider the affine group scheme versions, $\Stabs(\Gamma)$ and
$\Auts(\Gamma)$, of the stabilizer and the automorphism group of $\Gamma$. The former is 
straightforward, but the latter has some subtleties.}

\emph{Once this is done, the scheme theoretic version
of Equation \eqref{eq:StabAutG_CentNorm} will be proven (Theorem \ref{th:CentNorm}). Again, the proof
for $\Auts(\Gamma)$ requires extra care.}

\bigskip

Recall that if $\Hs$ is a closed subgroup of $\Gs$, the centralizer and normalizer of $\Hs$ in $\Gs$
are the subschemes of $\Gs$ that act as follows on objects:
\[
\begin{split}
\Cents_\Gs(\Hs)\colon R&\longmapsto\{g\in\Gs(R)\mid \text{$\forall R$-algebra $R'$ and }
\forall h\in \Gs(R')\ ghg^{-1}=h\},\\
\Norms_\Gs(\Hs)\colon R&\longmapsto\{g\in\Gs(R)\mid \text{$\forall R$-algebra $R'$}\ 
g\Hs(R')g^{-1}=\Hs(R')\}.
\end{split}
\]
Here there is an abuse of notation, we denote by $g$ both an element in $\Gs(R)$ and its image
in $\Gs(R')$ under $\Gs(\iota)$, for $\iota\colon R\rightarrow R'$ the algebra homomorphism determining
the structure of $R'$ as an $R$-algebra. This should create no confusion.

\smallskip

Once the goal above is achieved in Section \ref{se:AutGamma}, it is natural to consider the quotient 
\[
\Auts(\Gamma)/\Stabs(\Gamma) 
=\Norms_{\Auts(\cA)}\bigl(\Diags(\Gamma)\bigr)/\Cents_{\Auts(\cA)}\bigl(\Diags(\Gamma)\bigr).
\]
In Section \ref{se:Weyl}, this scheme theoretic quotient  will be shown to be a constant group scheme 
$\Wsf(\Gamma)$  (Proposition \ref{pr:Weyl}), called the \emph{Weyl group scheme} of $\Gamma$; 
but its associated ordinary group (the group of rational points over $\FF$)
 is not, in general, our previous Weyl group $W(\Gamma)$, as shown in Example \ref{ex:cubic2}, but
rather the Weyl group $W(\Gamma_{\overline{\FF}})$ of the grading $\Gamma_{\overline{\FF}}$
obtained on $\cA\otimes_\FF\overline{\FF}$ by extension of scalars, where $\overline{\FF}$ denotes
an algebraic closure of $\FF$.

\bigskip

In what follows $\cA$ will always denote a finite-dimensional nonassociative algebra
 over an arbitrary ground field $\FF$. Unadorned tensor products will be 
assumed to be over $\FF$.


\bigskip

\section{The automorphism group scheme of a grading}\label{se:AutGamma}

In order to define the automorphism group scheme of a grading we need a small detour to deal
with gradings on vector spaces.

Given a finite-dimensional vector space $\cV$ over a field $\FF$ and a decomposition 
$\Gamma:\cV=\bigoplus_{s\in S}\cV_s$ as a
direct sum of subspaces, denote by $\GL(\cV,\Gamma)$ the subgroup of the general linear group
$\GL(\cV)$ consisting of those linear automorphisms $\varphi$ of $\cV$ such that, for any 
$s\in \supp(\Gamma)\bydef\{s\in S\mid \cV_s\neq 0\}$,  there
is an element $\sigma(s)\in \supp(\Gamma)$ such that $\varphi(\cV_s)=\cV_{\sigma(s)}$. Under these
circumstances, $\sigma$ is a permutation of $\supp(\Gamma)$ 
($\sigma\in\Sym(\supp(\Gamma))$) with the property that 
$\dim_\FF \cV_s=\dim_\FF \cV_{\sigma(s)}$ for all $s\in \supp(\Gamma)$.

The affine group scheme version $ \GLs(\cV,\Gamma)$ works as follows. Any algebra
$R$ in $\AlgF$ without proper idempotents is sent to
\begin{multline*}
 \GLs(\cV,\Gamma)(R)\bydef\{\varphi\in \Aut_{R-\Mod}(\cV\otimes R)\mid \\ 
\exists\sigma\in\Sym(\supp(\Gamma))\ \text{such that}\ 
\varphi(\cV_s\otimes R)=\cV_{\sigma(s)}\otimes R\ \forall s\in \supp(\Gamma)\}.
\end{multline*}

For an arbitrary algebra $R$ in $\AlgF$ this must be modified in order to get an affine group scheme:
\begin{equation}\label{eq:GLsVGamma}
\begin{split}
 \GLs(\cV,\Gamma)(R)&\bydef \{\varphi\in\Aut_{R\textrm{-Mod}}(\cV\otimes R)\mid 
        \exists n\in\NN,\, \exists e_1,\ldots,e_n\ \text{nonzero}\\
&\qquad  \ \text{orthogonal idempotents in $R$ with}\ 1=e_1+\cdots+e_n, \\ 
 &\qquad          \ 
         \text{and }\exists \sigma_1,\ldots,\sigma_n\in\Sym(\supp(\Gamma)),\ \text{such that}\\ 
&\qquad  
 \varphi(\cV_s\otimes Re_i)= \cV_{\sigma_i(s)}\otimes Re_i\ \forall s\in \supp(\Gamma),\,\forall i=1,\ldots,n\}.
\end{split}
\end{equation}
That is, $ \GLs(\cV,\Gamma)(R)$ consists of those $R$-linear automorphisms of $\cV\otimes R$ for which 
there are nonzero orthogonal idempotents $e_1,\ldots,e_n$, with $1=e_1+\cdots + e_n$, such that
the restriction of $\varphi$ to each $\cV\otimes Re_i$ permutes the nonzero components 
$\cV_s\otimes Re_i$. 
Note that 
the permutation may be different for each $i=1,\ldots,n$.

For any homomorphism $f\colon R\rightarrow S$ in $\AlgF$, the definition of $ \GLs(\cV,\Gamma)(f)$ 
is just given by
extension of scalars from $R$ to $S$ via $f$.

\smallskip

It is not difficult to check that $ \GLs(\cV,\Gamma)$ is indeed an affine group scheme (i.e., representable).
For this, let $H$ be the subgroup of $\Sym(\supp(\Gamma))$ consisting of those permutations $\sigma$ with
$\dim_\FF \cV_s=\dim_\FF \cV_{\sigma(s)}$ for any $s\in \supp(\Gamma)$. Write 
$\supp(\Gamma)=\{s_1,\ldots,s_m\}$ and take bases
$(v_1,\ldots,v_{n_1})$ of $\cV_{s_1}$, $(v_{n_1+1},\ldots,v_{n_1+n_2})$ of $\cV_{s_2}$, ...., 
$(v_{n_1+\cdots+n_{m-1}+1},\ldots,v_{n_1+\cdots +n_m})$ of $\cV_{s_m}$. Write 
$I_1=\{1,\ldots,n_1\}$, $I_2=\{n_1+1,\ldots,n_1+n_2\}$, ..., 
$I_m=\{n_1+\cdots+n_{m-1}+1,\ldots n_1+\cdots +n_m\}$. Then $ \GLs(\cV,\Gamma)$
is represented by the algebra
\[
\cA=\prod_{\sigma\in H}\cA_\sigma,
\]
 where, for each $\sigma\in H$, the algebra $\cA_\sigma$ is the quotient of the polynomial algebra
\[
\FF[X_{ij},Y\mid 1\leq i,j\leq n_1+\cdots+n_m]
\]
by the ideal generated by $\det(X_{ij})Y-1$ and by the $X_{ij}$'s, with  
$i\in I_p$, $j\in I_q$ and $s_q\neq \sigma(s_p)$:
\[
\cA_\sigma=\FF[X_{ij},Y]/\left(\det(X_{ij})Y-1,\, X_{ij}\mid i\in I_p,\, j\in I_q,\, 
s_q\neq \sigma(s_p)\right).
\]

Note that for $\sigma=\id$, the algebra $\cA_{\id}$ represents the \emph{stabilizer}  $\Stabs(\Gamma)$,
that takes any $R$ to the $R$-linear automorphisms of $\cV\otimes R$ fixing each component
$\cV_s\otimes R$.

\medskip

We are now in position to define the automorphism group scheme of a grading.

Let $\Gamma:\cA=\bigoplus_{g\in G}\cA_g$ be a grading by an abelian group $G$ of our algebra $\cA$. 
As before, we may consider the \emph{stabilizer affine group scheme} $\Stabs(\Gamma)$, that takes
any $R$ in $\AlgF$ to the group
\[
\Stabs(\Gamma)(R)\bydef\{\varphi\in\Aut_{\AlgR}(\cA\otimes R)\mid \varphi(\cA_g\otimes R)=\cA_g\otimes R\ \forall g\in G\},
\]
and acts naturally on homomorphisms in $\AlgF$. The \emph{automorphism group scheme of the
grading $\Gamma$}, denoted by $\Auts(\Gamma)$, is defined simply as the intersection
\begin{equation}\label{eq:AutsGamma}
\Auts(\Gamma)\bydef \Auts(\cA)\cap \GLs(\cA,\Gamma).
\end{equation}
Note that $\Auts(\Gamma)(\FF)$ is the traditional group $\Aut(\Gamma)$ of automorphisms of $\Gamma$.

\medskip

The next theorem is the main result of this note. It shows that the situation for algebraically closed 
fields of characteristic $0$ extends to arbitrary fields if affine group schemes are used.

\begin{theorem}\label{th:CentNorm}
Let $\Gamma:\cA=\bigoplus_{g\in G}\cA_g$ be a grading by an abelian group on a  finite-dimensional
algebra $\cA$ over an arbitrary field $\FF$. Then,
\begin{enumerate}
\item $\Stabs(\Gamma)$ is the centralizer in $\Auts(\cA)$ of the diagonal group scheme 
$\Diags(\Gamma)$:
\[
\Stabs(\Gamma)=\Cents_{\Auts(\cA)}\bigl(\Diags(\Gamma)\bigr).
\]

\item $\Auts(\Gamma)$ is the normalizer in $\Auts(\cA)$ of the diagonal group scheme
$\Diags(\Gamma)$:
\[
\Auts(\Gamma)=\Norms_{\Auts(\cA)}\bigl(\Diags(\Gamma)\bigr).
\]
\end{enumerate}
\end{theorem}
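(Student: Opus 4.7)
The plan hinges on two tools: a \emph{generic character} argument (to recover the grading from the diagonal action) and an explicit description of the group-likes of $R\otimes\FF G$ (to extract the required idempotent-plus-permutation data from the normalizer condition). We may assume, without loss of generality, that $G$ is generated by $\supp(\Gamma)$, and hence is finitely generated.

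For (i), the inclusion $\Stabs(\Gamma)\subseteq\Cents_{\Auts(\cA)}(\Diags(\Gamma))$ is immediate from the definitions. For the converse, take $R'=R\otimes\FF G$ and the generic character $\chi\colon G\to(R')^\times$, $g\mapsto 1\otimes g$, producing $\tau_\chi\in\Diags(\Gamma)(R')$ which scales each $\cA_g\otimes R'$ by $1\otimes g$. If $\varphi\in\Cents_{\Auts(\cA)}(\Diags(\Gamma))(R)$, then for $x\in\cA_g\otimes R$ with homogeneous decomposition $\varphi(x)=\sum_h y_h$, $y_h\in\cA_h\otimes R$, comparing the coefficients of the basis of $\FF G$ in the identity $\tau_\chi\varphi(x)=\varphi\tau_\chi(x)$ (computed inside $\cA\otimes R'$) forces $y_h=0$ for $h\neq g$. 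Hence $\varphi(\cA_g\otimes R)\subseteq\cA_g\otimes R$, as required.

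For the easy direction of (ii), take $\varphi\in\Auts(\Gamma)(R)$ with orthogonal idempotents $e_1,\dots,e_n$ and permutations $\sigma_1,\dots,\sigma_n$ as in \eqref{eq:GLsVGamma}. For any $\mu\in\Diags(\Gamma)(R')$ with $\mu|_{\cA_g\otimes R'}=\mu_g\id$, a direct calculation shows that $\varphi\mu\varphi^{-1}$ acts on $\cA_h\otimes R'$ as the scalar $\sum_{i=1}^n\mu_{\sigma_i^{-1}(h)}\,e_i\in R'$, hence lies in $\Diags(\Gamma)(R')$.

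The main obstacle is the reverse inclusion $\Norms_{\Auts(\cA)}(\Diags(\Gamma))\subseteq\Auts(\Gamma)$. Given $\varphi\in\Norms_{\Auts(\cA)}(\Diags(\Gamma))(R)$, conjugation by $\varphi$ yields an $R$-Hopf algebra automorphism $c_\varphi$ of the coordinate Hopf algebra $R\otimes\FF G$ of $\Diags(\Gamma)_R$, and hence preserves group-likes. A short calculation with $\Delta(\sum_u r_u\otimes u)=\sum_u r_u\otimes u\otimes u$ shows that the group-likes of $R\otimes\FF G$ are exactly the elements of the form $\sum_{i=1}^n e_i\otimes g_i$ with $e_1,\dots,e_n$ nonzero orthogonal idempotents in $R$ summing to $1$ and $g_i\in G$. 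Applying $c_\varphi$ to $1\otimes g_j$ for each generator $g_j$ of $G$ and taking a common refinement of the finitely many resulting idempotent partitions of $1\in R$, we obtain a single decomposition $1=e_1+\cdots+e_n$ and group automorphisms $\sigma_1,\dots,\sigma_n$ of $G$ (automorphisms because $c_{\varphi^{-1}}=c_\varphi^{-1}$) such that $c_\varphi(1\otimes g)=\sum_i e_i\otimes\sigma_i(g)$ for every $g\in G$. To finish, for $x\in\cA_g$ expand $\varphi^{-1}(x\otimes 1)=\sum_h z_h$ into homogeneous components and match $\FF G$-coefficients in $\varphi\tau_\chi\varphi^{-1}(x\otimes 1)=(x\otimes 1)\bigl(\sum_i e_i\otimes\sigma_i(g)\bigr)$; this yields $\varphi^{-1}(x\otimes e_i)\in\cA_{\sigma_i(g)}\otimes Re_i$, and running the same argument with $\varphi^{-1}\in\Norms_{\Auts(\cA)}(\Diags(\Gamma))(R)$ upgrades the inclusions to equalities $\varphi(\cA_h\otimes Re_i)=\cA_{\sigma_i^{-1}(h)}\otimes Re_i$. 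Since each $\sigma_i^{-1}$ automatically restricts to a permutation of $\supp(\Gamma)$, this places $\varphi$ in $\Auts(\Gamma)(R)$ via \eqref{eq:GLsVGamma}.
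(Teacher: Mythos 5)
Your proof of part (1) and of the containment $\Auts(\Gamma)\subseteq\Norms_{\Auts(\cA)}\bigl(\Diags(\Gamma)\bigr)$ is correct and essentially identical to the paper's (generic character over $RG$, eigenspace decomposition, and the idempotent computation for the easy inclusion).

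The reverse containment in part (2), however, rests on a step that is false as stated. You assert that conjugation by $\varphi\in\Norms_{\Auts(\cA)}\bigl(\Diags(\Gamma)\bigr)(R)$ induces a Hopf algebra automorphism $c_\varphi$ of $R\otimes\FF G$, ``the coordinate Hopf algebra of $\Diags(\Gamma)_R$'', and deduce that $c_\varphi(1\otimes g)=\sum_i e_i\otimes\sigma_i(g)$ with each $\sigma_i$ an automorphism of $G$. But $\Diags(\Gamma)$ is represented by $\FF U$ for the \emph{universal group} $U$ of $\Gamma$, not by $\FF G$, and this remains so after your reduction to $G=\langle\supp(\Gamma)\rangle$: conjugation by $\varphi$ preserves $\Diags(\Gamma)$ but need not preserve the image of $\Ds(G)$, so it induces no automorphism of $R\otimes\FF G$. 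The paper's Example~\ref{ex:norm} defeats the intermediate claim outright: for $\cA=\FF u\oplus\FF v$ with zero multiplication, $G=C_6=\langle g\rangle$, $\deg u=g^2$, $\deg v=g^3$, one has $\Diags(\Gamma)=\Gsm\times\Gsm$, with coordinate ring $\FF[\ZZ^2]$ rather than $\FF C_6$; the swap $\varphi$ of $u$ and $v$ normalizes $\Diags(\Gamma)$, yet it conjugates the image of $\Ds(C_6)$, namely $\bmu_3\times\bmu_2$, onto the different subgroup $\bmu_2\times\bmu_3$, and no automorphism (indeed no endomorphism) $\sigma$ of $C_6$ satisfies $\sigma(g^2)=g^3$. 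This is exactly the phenomenon isolated in Remark~\ref{re:GD}. Consequently the identity $\varphi\tau_\chi\varphi^{-1}(x\otimes 1)=(x\otimes 1)\bigl(\sum_i e_i\otimes\sigma_i(g)\bigr)$ on which your final coefficient matching relies is unjustified, and in this generality false.

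The idea is salvageable and, once repaired, genuinely different from the paper's argument. Replace $G$ by the universal group $U$ (this changes none of $\Stabs(\Gamma)$, $\Auts(\Gamma)$, $\Diags(\Gamma)$, which depend only on the decomposition of $\cA$, and carries $\supp(\Gamma)$ bijectively); then $\Ds(U)\simeq\Diags(\Gamma)$, conjugation by $\varphi$ really is an $R$-Hopf automorphism of $R\otimes\FF U$, and your classification of group-likes produces the idempotents $e_i$ and the permutations of the support in one stroke, avoiding the paper's reduction to rings without proper idempotents and its prime-ideal/connectedness analysis of the coefficients of $s_g\in RG$. But that replacement must be made explicitly and justified; ``$G$ generated by the support'' is not enough.
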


\begin{proof}
For the first part, $\Stabs(\Gamma)$ is clearly a subgroup scheme of the centralizer
$\Cents_{\Auts(\cA)}\bigl(\Diags(\Gamma)\bigr)$. 

For the converse, let $R$ be in $\AlgF$ and let $\varphi$ be an element of the centralizer
$\Cents_{\Auts(\cA)}\bigl(\Diags(\Gamma)\bigr)(R)$. Let $R'=RG$ be the group algebra of $G$ over $R$
and consider the `generic' diagonal automorphism $\psi\in \Diags(\Gamma)(R')$ given by
\[
\psi(a_g\otimes 1)=a_g\otimes g,
\]
for any $g\in G$ and $a_g\in\cA_g$. As a consequence, $\cA_g\otimes R'$ is the
eigenspace of $\psi$ of eigenvalue $g\in R'$. Then, extending $\varphi$ to $R'$, we get, for any
$g\in G$ and $a_g\in \cA_g$,
\[
\psi\varphi(a_g\otimes 1)=\varphi\psi(a_g\otimes 1)=\varphi(a_g\otimes g)=\varphi(a_g\otimes 1)g,
\]
and this shows that $\varphi(a_g\otimes 1)$ lies in $\cA_g\otimes R$, proving that $\varphi$ belongs
to $\Stabs(\Gamma)(R)$.

\smallskip

For the second part, write $\Ns=\Norms_{\Auts(\cA)}\bigl(\Diags(\Gamma)\bigr)$, and recall
that for any $R$ in $\AlgF$:
\[
\Ns(R)\bydef\{\varphi\in\Auts(\cA)(R)\mid \varphi\Diags(\Gamma)(R')\varphi^{-1}=\Diags(\Gamma)(R')\ 
\forall R'\text{ in}\,\AlgR\},
\]
while an element $\varphi\in\Auts(\cA)(R)$ lies in $\Auts(\Gamma)(R)$ if and only if there are nonzero
orthogonal idempotents $e_1,\ldots,e_n\in R$ with $1=e_1+\cdots +e_n$, and permutations
$\sigma_1,\ldots,\sigma_n$ of $\supp(\Gamma)$ such that
\[
\varphi(\cA_g\otimes Re_i)=\cA_{\sigma_i(g)}\otimes Re_i
\]
for all $i=1,\ldots,n$ and all $g\in\supp(\Gamma)$. For any $R'$ in $\AlgR$ and any 
$\delta\in\Diags(\Gamma)(R')$, $\delta$ acts on $\cA_{\sigma_i(g)}\otimes R'$ by multiplication
by a scalar $r'$, and
hence it acts on $\cA_{\sigma_i(g)}\otimes R'e_i$ by multiplication by $r'e_i$. As a consequence, 
$\varphi^{-1}\delta\varphi$ acts on $\cA_g\otimes R'e_i$ by multiplication by $r'e_i$. It follows then that
$\varphi^{-1}\delta\varphi$ acts diagonally, and similarly for $\varphi\delta\varphi^{-1}$. Hence,
$\varphi\Diags(\Gamma)(R')\varphi^{-1}=\Diags(\Gamma)(R')$ and $\varphi\in\Ns(R)$. We have
thus proved that $\Auts(\Gamma)$ is a subgroup scheme of $\Ns$.

Take now an automorphism  $\varphi\in\Ns(R)$. In order to check that $\varphi$ lies in 
$\Auts(\Gamma)(R)$
we will make some reductions:

\begin{enumerate}
\item We may assume that $R$ is finitely generated:

Indeed, fix a basis $\{v_1,\ldots,v_n\}$ of $\cA$ consisting of homogeneous elements. The matrices
of $\varphi$ and $\varphi^{-1}$ in the basis $\{v_i\otimes 1\mid 1\leq i\leq n\}$ of $\cA\otimes R$
involve only a finite number of elements of $R$. Let $R'$ be the subalgebra of $R$ generated by 
these elements. Let $\iota\colon R'\rightarrow R$ be the inclusion. Then, our $\varphi\in\Ns(R)$
is the image under $\Ns(\iota)$ of an element in $\Ns(R')$ and we can substitute $R$ by $R'$.

\smallskip

\item We may assume that $R$ has no proper idempotents:

Since we are assuming now that $R$ is finitely generated, take a family of nonzero orthogonal 
idempotents $e_1,\ldots,e_m$ with $m$ maximal (this number is bounded by the number of irreducible
components of the prime spectrum of $R$ because, for any $e=e^2$, the set of prime ideals 
containing $e$
is both open and closed). Then $1=e_1+\cdots+e_m$, as otherwise we may add 
the idempotent $1-(e_1+\cdots+e_m)$ to our family, and hence $R=Re_1\oplus \cdots\oplus Re_m$ and
each $Re_i$ has no proper idempotent. Now it is enough to note that
 $\Ns(R)=\Ns(Re_1)\times\cdots\times \Ns(Re_m)$.
\end{enumerate}

Hence assume that $\varphi$ is an element of $\Ns(R)$ with $R$ containing no proper idempotents.
As above, consider the
$\FF$-algebra $R'=RG$ (group algebra over $R$), and the diagonal element $\psi\in\Diags(\Gamma)(R')$
given by $\psi(a_g\otimes 1)=a_g\otimes g$ for any $g\in G$ and $a_g\in\cA_g$. Since $\varphi$ is in
$\Ns(R)$ we have $\varphi^{-1}\psi\varphi\in\Diags(\Gamma)(R')$, so for any $g\in \supp(\Gamma)$
there is an invertible scalar $s_g\in R'$ such that 
\[
\varphi^{-1}\psi\varphi\vert_{\cA_g\otimes R'}=s_g\id.
\]
Thus, $s_g=r_1h_1+\cdots +r_nh_n$ for some $h_1,\ldots,h_n\in G$ and nonzero scalars $r_i\in R$.

For $a\in \cA_g$ we want to prove that $\varphi(a\otimes 1)$ is homogeneous and that its degree
 depends only on $g$. We have
\begin{equation}\label{eq:1}
\psi\varphi(a\otimes 1)=\varphi\bigl(\varphi^{-1}\psi\varphi\bigr)(a\otimes 1)=\varphi(a\otimes s_g)=
\varphi(a\otimes 1)s_g.
\end{equation}
Write $\varphi(a\otimes 1)=\sum_{h\in G}a_h$ (finite sum), with $a_h\in\cA_h\otimes R$. Then also
\[
\psi\varphi(a\otimes 1)=\sum_{h\in G}\psi(a_h)=\sum_{h\in G}a_hh.
\]
The last equation, together with \eqref{eq:1}, gives
\begin{equation}\label{eq:2}
\sum_{h\in G}a_hh=\sum_{h\in G}a_hs_g\ \, (\in\cA\otimes R'),
\end{equation}
and since $\cA\otimes R'=\bigoplus_{g\in G}\bigoplus_{h\in G}\cA_g\otimes Rh$ we conclude that
\begin{equation}\label{eq:3}
\begin{aligned}
&a_h=0\text{\quad for }h\neq h_1,\ldots,h_n,\\
&a_{h_i}=a_{h_i}r_i\text{\quad for $i=1,\ldots,n$},\\
&a_{h_i}r_j=0\text{\quad for $i\neq j$}.
\end{aligned}
\end{equation}

Let $P$ be a prime ideal of $R$. For any $b\in \cA\otimes R$ denote by $\bar b$ its image in
$\cA\otimes R/P$, and for any $r\in R$ denote by $\bar r$ its image in the integral domain $R/P$.
As $s_g=r_1h_1+\cdots+r_nh_n$ is invertible in $R'=RG$, it follows that $r_1+\cdots +r_n$ is
invertible in $R$, because of the counit homomorphism $RG\rightarrow R$ taking any $g\in G$ to $1$.
Therefore $\bar r_1+\cdots+\bar r_n$ is invertible in $R/P$.

Also, since $\varphi$ is an automorphism, the induced map 
$\bar\varphi\in\Aut_{\Alg_{R/P}}(\cA\otimes R/P)$ is also an automorphism, so 
$0\neq \bar\varphi(a\otimes 1)=\bar a_{h_1}+\cdots+\bar a_{h_n}$. In particular, there is
an index $i$ with $\bar a_{h_i}\neq 0$. From \eqref{eq:3} we get $\bar a_{h_i}\bar r_i=\bar a_{h_i}$,
so $\bar r_i\neq 0$, and since $R/P$ is an integral domain, its action on the free $R/P$-module
$\cA\otimes R/P$ has no torsion, so $\bar r_j=0$, because $\bar a_{h_i}\bar r_j=0$ by \eqref{eq:3}.

Hence, for any prime ideal $P$ of $R$ there is a unique index $i$, $1\leq i\leq n$, such that 
$\bar r_i\neq 0$,
that is, $r_i\not\in P$. We thus get a locally constant map
\[
\Phi\colon\operatorname{Spec}(R)\longrightarrow \{1,\ldots,n\},
\]
where we consider the discrete topology on the right, with 
\[
\Phi^{-1}(i)=\{P\in\operatorname{Spec}(R)\mid r_i\not\in P\}
\]
which is the basic open set $D(r_i)$ in $\operatorname{Spec}(R)$. 

But $\operatorname{Spec}(R)$ is connected because $R$ does not contain proper idempotents, so we 
conclude that there is an index $i$ which, after relabeling, we may assume to be $1$, such that
for any prime ideal $P$, $r_1\not\in P$ while $r_2,\ldots,r_m\in P$. It follows that $r_1$ is a unit in $R$,
as it is not contained in any maximal ideal, while $r_2,\ldots,r_m$ are nilpotent. Now from \eqref{eq:3}
we get $a_{h_j}r_1=0$ for $j\neq 1$, and hence $a_{h_j}=0$ for $j\neq 1$ and the element $s_g$ in
\eqref{eq:1} is just $r_1h_1$. We conclude that $\varphi(a\otimes 1)$ belongs to $\cA_{h_1}\otimes R$,
and $h_1$ depends only on $g\in\supp(\Gamma)$ and not on the element $a\in\cA_g$.

Therefore, we have shown that for any $g\in\supp(\Gamma)$, there is an element $h\in\supp(\Gamma)$
with $\varphi(\cA_g\otimes R)\subseteq \cA_h\otimes R$, and the same works for $\varphi^{-1}$. In
conclusion, $\varphi$ permutes the homogeneous components, so $\varphi\in\Auts(\Gamma)(R)$, as
required.
\end{proof}

A remark is in order.

\begin{remark}\label{re:GD}
Let $\Gamma:\cA=\bigoplus_{g\in G}\cA_g$ be a grading by an abelian group on a  finite-dimensional
algebra $\cA$ over an arbitrary field $\FF$ as in Theorem \ref{th:CentNorm}, and let 
$\eta_\Gamma\colon\Ds(G)\rightarrow \Auts(\cA)$ be the associated morphism of group schemes in
\eqref{eq:etaG}. If $H$ is
the subgroup of $G$ generated by $\supp(\Gamma)$, then $\eta_\Gamma$ factors naturally as
\[
\begin{tikzcd}
\Ds(G) \arrow[r, twoheadrightarrow] &\Ds(H) \arrow[r, hookrightarrow] &\Auts(\cA),
\end{tikzcd}
\]
where the first morphism is the restriction to $H$ (a quotient map) and the second gives a closed
 imbedding. We can then identify $\Ds(H)$ with a subgroup of $\Auts(\cA)$. It is the image of the
morphism $\eta_\Gamma$.

The proof of the first part of Theorem \ref{th:CentNorm} works substituting $\Diags(\Gamma)$ by
$\Ds(H)$, and hence it gives
\[
\Stabs(\Gamma)=\Cents_{\Auts(\cA)}\bigl(\Ds(H)\bigr).
\]

For the second part, the proof of the converse works too, obtaining
\[
\Auts(\Gamma)\geq \Norms_{\Auts(\cA)}\bigl((\Ds(H)\bigr),
\]
but the equality fails in general, as Example \ref{ex:norm} shows.
\end{remark}

\begin{example}\label{ex:norm}
Let $\cA=\FF u\oplus\FF v$ be a two-dimensional algebra with trivial multiplication: $u^2=uv=vu=v^2=0$,
graded by the cyclic group $G=\langle g\rangle$ of order $6$, 
with the elements $u$ and $v$ homogeneous of degrees 
$\deg(u)=g^2$ and $\deg(v)=g^3$. The group $G$ is generated by the support 
$\{g^2,g^3\}$ of this grading $\Gamma$. 

For any algebra $R$ in $\AlgF$, the elements of $\Ds(G)(R)$, 
considered as elements in $\Auts(\cA)(R)$
are the automorphisms 
\[
\tau_\chi\colon u\mapsto \chi(g^2)u,\ v\mapsto \chi(g^3)v
\]
for a character (i.e., group homomorphism) $\chi\colon G\rightarrow R^\times$. 

The automorphism $\varphi$
that swaps $u$ and $v$ clearly belongs to $\Auts(\Gamma)(\FF)=\Aut(\Gamma)$, but if
$\chi\colon G\rightarrow (\FF G)^\times$ is the character with $\chi(g)=g$, then there is no character
$\xi\colon G\rightarrow(\FF G)^\times$ with $\varphi\tau_\chi\varphi^{-1}=\tau_\xi$, because this would
force $\xi(g^2)=g^3$, $\xi(g^3)=g^2$, which is impossible, as we would have $\xi(g^5)=g^5$, and
$\xi=\chi$ because $g^5$ is a generator of $G$, a contradiction.

Therefore, $\varphi$ lies in $\Auts(\Gamma)(\FF)$ but not in 
$\Norms_{\Auts(\cA)}\bigl(\Ds(G)\bigr)(\FF)$.

The computation of $\Diags(\Gamma)$ and of $\Auts(\Gamma)$ is left as an exercise for the reader.
\end{example}

\smallskip

Let us finish this section with a useful remark.

\begin{remark}\label{re:Hsmooth}
Assume that the ground field $\FF$ is algebraically closed and that $\Hs$ is a smooth closed subgroup
scheme of the affine algebraic group scheme $\Gs$. Then 
\begin{equation}\label{eq:NormGH}
\Norms_{\Gs}(\Hs)(\FF)=\Norm_{\Gs(\FF)}\bigl(\Hs(\FF)\bigr). 
\end{equation}
In particular, over an algebraically
closed field of characteristic $0$, so that all affine group schemes are smooth, 
if $\Gamma:\cA=\bigoplus_{g\in G}\cA_g$ is a grading by an abelian group 
on a a finite-dimensional
algebra $\cA$, then we recover the result \cite[(1.6)]{EKmon} that $\Aut(\Gamma)=\Auts(\Gamma)(\FF)$ 
is 
$\Norm_{\Aut(\cA)}\bigl(\Diag(\Gamma)\bigr)=\Norms_{\Auts(\cA)(\FF)}\bigl(\Diags(\Gamma)(\FF)\bigr)$.

To prove \eqref{eq:NormGH}, let $\cH$ be the Hopf algebra representing $\Gs$, and let $\cI$ be the
Hopf ideal defining $\Hs$. As this is smooth, $\cI=\bigcap_{x\in\Hs(\FF)}\cM_x$, where $\cM_x$ is
the maximal ideal corresponding to $x$: $\cM_x=\ker x\colon \cH\rightarrow\FF$.

Let $\Delta$ be the comultiplication on $\cH$, $S$ its antipode, and use Sweedler's notation to write
$(\Delta\otimes\id)\Delta(a)=\sum a_{(1)}\otimes a_{(2)}\otimes a_{(3)}$.
Any $g\in\Gs(\FF)=\Hom_{\AlgF}(\cH,\FF)$ acts by conjugation on $\cH$:
\[
\iota_g(a)=\sum g(a_{(1)})gS(a_{(3)})a_{(2)},
\]
so for any $R$ in $\AlgF$ and $h\in\Gs(R)$:
\[
ghg^{-1}\colon a\mapsto \sum g(a_{(1)})h(a_{(2)})gS(a_{(3)})=h(\iota_g(a)).
\]
Hence $ghg^{-1}=h\iota_g$. Note that $\iota_{g_1g_2}=\iota_{g_2}\iota_{g_1}$ for any 
$g_1,g_2\in\Gs(\FF)$.

For $g\in\Gs(\FF)$, $\iota_g(\cI)=\cI$ if and only if  for all $R\in\AlgF$ and all $h\in\Gs(R)$ we have
that $h(\cI)=0$  if and only if
$ghg^{-1}(\cI)=0$, which holds if and only if $g\in\Norms_{\Gs}(\Hs)(\FF)$.

Note that $\Norms_{\Gs}(\Hs)(\FF)$ is contained in $\Norm_{\Gs(\FF)}\bigl(\Hs(\FF)\bigr)$ trivially.
On the other hand, for $g\in \Norm_{\Gs(\FF)}\bigl(\Hs(\FF)\bigr)$, $ghg^{-1}\in\Hs(\FF)$ for
any $h\in\Hs(\FF)$, and hence $h\bigl(\iota_g(\cI)\bigr)=0$ for any $h\in \Hs(\FF)$, so that
$\iota_g(\cI)$ is contained in $\cM_x$ for all $x\in\Hs(\FF)$, and we conclude that $\iota_g(\cI)$ is
contained in $\cI$. As $g^{-1}$ lies in $\Norm_{\Gs(\FF)}\bigl(\Hs(\FF)\bigr)$ too, we get also
$\iota_{g^{-1}}(\cI)\subseteq \cI$, so $\iota_g(\cI)=\cI$ and $g$ is in $\Norms_{\Gs}(\Hs)(\FF)$,
as required.
\end{remark}

\smallskip

If $\Hs$ is not smooth the equality $\Norms_{\Gs}(\Hs)(\FF)=\Norm_{\Gs(\FF)}\bigl(\Hs(\FF)\bigr)$ is
not valid in general. 

\bigskip

\section{Weyl groups}\label{se:Weyl}

This last section is devoted to define the Weyl group scheme of a grading as a constant group scheme.

Given a grading $\Gamma:\cA=\bigoplus_{g\in G}\cA_g$  by an abelian group on a a finite-dimensional
algebra $\cA$ over an arbitrary field $\FF$, the definition of $\Auts(\Gamma)$ gives at once a
natural morphism of affine group schemes
\begin{equation}\label{eq:AutGammaSymSupp}
\Auts(\Gamma)\rightarrow \Symsf\bigl(\supp(\Gamma)\bigr),
\end{equation}
with kernel $\Stabs(\Gamma)$, and
where the scheme on the right is the constant group scheme associated to the symmetric group
on $\supp(\Gamma)$. (Recall that given a finite group $G$, its associated constant group
scheme $\mathsf{G}$ assigns to any $R$ in $\AlgF$ without proper idempotents the group $G$.)

\begin{definition}\label{df:Weyl}
The image of the morphism in \eqref{eq:AutGammaSymSupp} is called the \emph{Weyl group scheme} of
the grading $\Gamma$. It will be denoted by $\Wsf(\Gamma)$.
\end{definition}

Since any subgroup of a constant affine group scheme is constant, $\Wsf(\Gamma)$ is a constant group
scheme, and there appears the short exact sequence
\begin{equation}\label{eq:ses}
\begin{tikzcd}
1 \arrow[r] &\Stabs(\Gamma) \arrow[r, hookrightarrow] & \Auts(\Gamma) \arrow[r, twoheadrightarrow] 
& \Wsf(\Gamma) \arrow[r] & 1.
\end{tikzcd}
\end{equation}

If $\overline{\FF}$ denotes an algebraic closure of $\FF$, since 
$\Auts(\Gamma) \twoheadrightarrow \Wsf(\Gamma)$
 is a quotient map, the induced group homomorphism
$\Auts(\Gamma)(\overline{\FF}) \twoheadrightarrow
\Wsf(\Gamma)(\overline{\FF})$
 is surjective (see, e.g., \cite[Theorem A.48]{EKmon}), and
we get the short exact sequence of groups
\[
\begin{tikzcd}
1 \arrow[r] &\Stabs(\Gamma)(\overline{\FF}) \arrow[r, hookrightarrow] 
& \Auts(\Gamma)(\overline{\FF}) \arrow[r, twoheadrightarrow] 
& \Wsf(\Gamma)(\overline{\FF}) \arrow[r] & 1.
\end{tikzcd}
\]
Denote by $\Gamma_{\overline{\FF}}$ the $G$-grading induced by $\Gamma$ on 
$\cA_{\overline{\FF}}=\cA\otimes\overline{\FF}$: 
$(\cA_{\overline{\FF}})_g=\cA_g\otimes\overline{\FF}$ for all $g\in G$. The last 
short exact sequence becomes
\[
\begin{tikzcd}
1 \arrow[r] &\Stab(\Gamma_{\overline{\FF}}) \arrow[r, hookrightarrow] 
& \Aut(\Gamma_{\overline{\FF}}) \arrow[r, twoheadrightarrow] 
& \Wsf(\Gamma)(\overline{\FF}) \arrow[r] & 1,
\end{tikzcd}
\]
and this shows that $\Wsf(\Gamma)(\overline{\FF})$ is the ordinary Weyl group 
$W(\Gamma_{\overline{\FF}})$. In other words:

\begin{proposition}\label{pr:Weyl}
The Weyl group scheme of $\Gamma$ is the constant group scheme associated to the 
Weyl group of $\Gamma_{\overline{\FF}}$.
\end{proposition}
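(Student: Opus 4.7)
The plan is to reduce the statement to a computation of $\overline{\FF}$-points. Since $\Wsf(\Gamma)$ is defined as a subgroup scheme of the constant group scheme $\Symsf(\supp(\Gamma))$, it is itself a constant group scheme; hence it is entirely determined as a group scheme by the finite group $\Wsf(\Gamma)(\overline{\FF})$. So it suffices to identify this group with the ordinary Weyl group $W(\Gamma_{\overline{\FF}})$.

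To do this, I would apply the functor $\Gs \mapsto \Gs(\overline{\FF})$ to the short exact sequence \eqref{eq:ses}. This functor is left exact, so injectivity on the left and exactness at the middle are automatic. The essential point is surjectivity: the map $\Auts(\Gamma) \twoheadrightarrow \Wsf(\Gamma)$ is a quotient morphism of affine group schemes by construction, so by a standard result (cited in the paper as \cite[Theorem A.48]{EKmon}), the induced map on $\overline{\FF}$-points is surjective. This yields the short exact sequence of ordinary groups
\[
1 \to \Stabs(\Gamma)(\overline{\FF}) \to \Auts(\Gamma)(\overline{\FF}) \to \Wsf(\Gamma)(\overline{\FF}) \to 1.
\]

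Next, I would identify the outer two terms with their ``ordinary'' counterparts for the grading $\Gamma_{\overline{\FF}}$ induced by $\Gamma$ on $\cA\otimes\overline{\FF}$. From the functorial definitions of $\Stabs(\Gamma)$ and $\Auts(\Gamma) = \Auts(\cA)\cap \GLs(\cA,\Gamma)$ given in Section \ref{se:AutGamma}, and using that $\overline{\FF}$ has no proper idempotents, one reads off directly $\Stabs(\Gamma)(\overline{\FF}) = \Stab(\Gamma_{\overline{\FF}})$ and $\Auts(\Gamma)(\overline{\FF}) = \Aut(\Gamma_{\overline{\FF}})$. Plugging these identifications into the exact sequence above and invoking the very definition $W(\Gamma_{\overline{\FF}}) = \Aut(\Gamma_{\overline{\FF}})/\Stab(\Gamma_{\overline{\FF}})$ yields $\Wsf(\Gamma)(\overline{\FF}) \cong W(\Gamma_{\overline{\FF}})$, which is exactly the claim.

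The only subtle ingredient is the surjectivity of $\Auts(\Gamma)(\overline{\FF}) \to \Wsf(\Gamma)(\overline{\FF})$, which is not a formal consequence of left exactness but requires the appeal to the general fact that faithfully flat (equivalently, for affine group schemes of finite type, quotient) morphisms induce surjections on $\overline{\FF}$-points; everything else is a direct unraveling of definitions. Observe that this result is consistent with Example \ref{ex:cubic2}, where the Weyl group over $\FF$ is strictly smaller than $W(\Gamma_{\overline{\FF}})$: the scheme $\Wsf(\Gamma)$ encodes the symmetries visible after extending scalars, not merely those defined over $\FF$.
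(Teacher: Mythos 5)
Your proposal is correct and follows essentially the same route as the paper: both arguments take $\overline{\FF}$-points of the short exact sequence \eqref{eq:ses}, use the constancy of $\Wsf(\Gamma)$ as a subgroup scheme of $\Symsf\bigl(\supp(\Gamma)\bigr)$ to reduce everything to the group $\Wsf(\Gamma)(\overline{\FF})$, invoke \cite[Theorem A.48]{EKmon} for surjectivity on $\overline{\FF}$-points of the quotient map, and identify the outer terms with $\Stab(\Gamma_{\overline{\FF}})$ and $\Aut(\Gamma_{\overline{\FF}})$. No gaps.
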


\smallskip

In general, $\Wsf(\Gamma)$ is not the constant group scheme associated to the ordinary
Weyl group $W(\Gamma)$, as the following example shows.

\begin{example}\label{ex:cubic2}
Let $\cA$ be the associative $\QQ$-algebra $\cA=\QQ 1\oplus\QQ u\oplus\QQ u^2$, with multiplication determined by
$u^3=2$. That is, $\cA$ is, up to isomorphism, 
the field  $\QQ(\sqrt[3]{2})$. The direct sum above gives a  
grading $\Gamma$ on $\cA$ by the cyclic group $C_3=\langle g\rangle$ of order $3$, where
$\deg(1)=e$, $\deg(u)=g$ and $\deg(u^2)=g^2$. The automorphism group
of $\cA$ is trivial, and hence so are $\Stab(\Gamma)$,  $\Aut(\Gamma)$, and $W(\Gamma)$.

However, $\cA_{\overline{\QQ}}=\overline{\QQ}1\oplus\overline{\QQ}w\oplus\overline{\QQ}w^2$, with $w=\frac{1}{\sqrt[3]{2}}u$, that satisfies $w^3=1$. Hence $\cA_{\overline{\QQ}}$ is isomorphic to 
$\overline{\QQ}\times \overline{\QQ}\times \overline{\QQ}$. Here the automorphism determined
by $\varphi(w)=w^2$ belongs to $\Aut(\Gamma_{\overline{\QQ}})$. 
It follows that the Weyl group $W(\Gamma_{\overline{\QQ}})$ is the cyclic group of order $2$, and hence
$\Wsf(\Gamma)$ is the associated constant group scheme $\Csf_2$. 

Trivially, $\Stabs(\Gamma)$ is, up to isomorphism, the affine group scheme of third roots of unity
 $\bmu_3$, and the short exact sequence \eqref{eq:ses} becomes
\begin{equation}\label{eq:ses2}
1\longrightarrow \bmu_3\longrightarrow \Auts(\Gamma)\longrightarrow \Csf_2\longrightarrow 1.
\end{equation}
Since $\Auts(\Gamma)(\QQ)$ is trivial, it turns out that the quotient map 
$\Auts(\Gamma)\rightarrow\Csf_2$ is not surjective for rational points. It also follows that
the short exact sequence \eqref{eq:ses2} does not split, because the group 
$\Aut(\Gamma)=\Auts(\Gamma)(\QQ)$ is trivial, so it does not contain cyclic subgroups of order $2$.

Note that since $\cA$ is an \'etale algebra, $\Auts(\cA)$ is an \'etale group scheme (see e.g., 
\cite[Exercise VI.1]{KMRT}). Also,
$\Auts(\cA)(\overline{\QQ})=\Aut(\cA_{\overline{\QQ}})\simeq \Aut(\overline{\QQ}^3)$, which
is the symmetric group of degree $3$ (and order $6$). From here we get that $\Auts(\Gamma)$ is the whole $\Auts(\cA)$.
\end{example}

The last result is a consequence of Theorem \ref{th:CentNorm}. It deals with fine gradings, which are
those gradings $\Gamma:\cA=\bigoplus_{g\in G}\cA_g$ such that there exists no other grading
$\Gamma':\cA=\bigoplus_{h\in H}\cA'_h$ by an abelian group with the property that every 
nonzero homogeneous component $\cA'_h$ of $\Gamma'$ is 
contained in a homogeneous component $\cA_g$ of $\Gamma$, with at least one of these containments
being strict. In a way, fine gradings are those whose homogeneous components are small, and this
translates into the fact that its diagonal group scheme $\Diags(\Gamma)$ is a maximal
diagonalizable subgroup scheme (or quasitorus) of $\Auts(\cA)$. (See \cite[Proposition 1.37]{EKmon}.)

Given a grading $\Gamma:\cA=\bigoplus_{g\in G}\cA_g$, the diagonal group scheme $\Diags(\Gamma)$
is a diagonalizable group scheme, and hence it is isomorphic to $\Ds(U)$ for an abelian group $U$. 
This is the \emph{universal group} of $\Gamma$. The associated morphism 
$\eta_\Gamma\colon \Ds(G)\rightarrow \Auts(\cA)$ factors as 
$\Ds(G)\rightarrow \Ds(U)\simeq \Diags(\Gamma)\hookrightarrow \Auts(\cA)$, and this induces a group
homomorphism $U\rightarrow G$. The grading $\Gamma$ can be realized as a grading by $U$, and as
such, the homomorphism $U\rightarrow G$ takes bijectively the support of $\Gamma$ as an $U$-grading
with the support of $\Gamma$ as a $G$-grading. Therefore, when considered as a grading by $U$, 
the morphism $\eta_\Gamma\colon \Ds(U)\rightarrow \Auts(\cA)$ takes $\Ds(U)$ isomorphically onto  
$\Diags(\Gamma)$.

\begin{corollary}\label{co:fine}
Let $\cA$ and $\cB$ be two finite-dimensional algebras over a field $\FF$ and let 
$\phi\colon\Auts(\cA)\rightarrow \Auts(\cB)$ be an isomorphism. 
Let $\Gamma$ be a fine grading on $\cA$ with universal group $U$. Let $\eta_\Gamma\colon \Ds(U)\rightarrow \Auts(\cA)$ be the associated 
morphism, and let $\Gamma'$ be the grading by $U$ on $\cB$ associated to the morphism 
$\phi\eta_\Gamma$.
Then $\Gamma'$ is fine and $\phi$ restricts to isomorphisms $\Stabs(\Gamma)\simeq\Stabs(\Gamma')$
and $\Auts(\Gamma)\simeq\Auts(\Gamma')$. As a consequence, $\phi$ induces an isomorphism
$\Wsf(\Gamma)\simeq\Wsf(\Gamma')$.
\end{corollary}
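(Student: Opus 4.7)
The plan is to transport the diagonal group scheme through the isomorphism $\phi$ and then read off everything from Theorem \ref{th:CentNorm}.

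First I would check that $\Gamma'$ is fine with $\Diags(\Gamma')=\phi\bigl(\Diags(\Gamma)\bigr)$. Since $\Gamma$ is fine with universal group $U$, the morphism $\eta_\Gamma\colon\Ds(U)\to\Auts(\cA)$ is a closed embedding whose image is $\Diags(\Gamma)$, a maximal diagonalizable subgroup scheme of $\Auts(\cA)$ by \cite[Proposition~1.37]{EKmon}. Since $\phi$ is an isomorphism, $\phi\eta_\Gamma\colon \Ds(U)\to\Auts(\cB)$ is again a closed embedding, and its image $\phi\bigl(\Diags(\Gamma)\bigr)$ is a maximal diagonalizable subgroup scheme of $\Auts(\cB)$. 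By the same characterization of fineness, the associated $U$-grading $\Gamma'$ on $\cB$ is fine with $\Diags(\Gamma')=\phi\bigl(\Diags(\Gamma)\bigr)$; the delicate point, which I expect to be the main obstacle, is to justify that the diagonal group scheme of the grading built from a closed embedding $\Ds(U)\hookrightarrow\Auts(\cB)$ really equals its image (a priori the image is only contained in $\Diags(\Gamma')$), but this is forced by the maximality just established, since both are diagonalizable and the second is maximal.

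Next I would apply Theorem \ref{th:CentNorm} on both sides, obtaining
\[
\Stabs(\Gamma)=\Cents_{\Auts(\cA)}\bigl(\Diags(\Gamma)\bigr),\quad \Auts(\Gamma)=\Norms_{\Auts(\cA)}\bigl(\Diags(\Gamma)\bigr),
\]
together with the analogous identities on the $\cB$ side. For any isomorphism of affine group schemes $\phi\colon\Gs\to\Gs'$ and any closed subgroup scheme $\Hs\subseteq\Gs$, a routine check on $R$-points shows
\[
\phi\bigl(\Cents_\Gs(\Hs)\bigr)=\Cents_{\Gs'}\bigl(\phi(\Hs)\bigr),\qquad \phi\bigl(\Norms_\Gs(\Hs)\bigr)=\Norms_{\Gs'}\bigl(\phi(\Hs)\bigr).
\]
Applied to $\Hs=\Diags(\Gamma)$ inside $\Gs=\Auts(\cA)$ and combined with the identification $\phi(\Diags(\Gamma))=\Diags(\Gamma')$ obtained in the first step, this directly yields the restrictions $\Stabs(\Gamma)\simeq\Stabs(\Gamma')$ and $\Auts(\Gamma)\simeq\Auts(\Gamma')$.

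Finally, since the induced isomorphism $\Auts(\Gamma)\simeq \Auts(\Gamma')$ carries $\Stabs(\Gamma)$ onto $\Stabs(\Gamma')$, passing to the quotient in the short exact sequence \eqref{eq:ses} delivers the asserted isomorphism $\Wsf(\Gamma)\simeq\Wsf(\Gamma')$ of Weyl group schemes.
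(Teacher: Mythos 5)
Your proposal is correct and follows essentially the same route as the paper: transport $\Diags(\Gamma)$ through $\phi$, use that fineness is equivalent to maximality of the diagonal quasitorus to conclude $\Gamma'$ is fine and $\phi(\Diags(\Gamma))=\Diags(\Gamma')$, and then invoke Theorem \ref{th:CentNorm} together with the fact that isomorphisms preserve centralizers and normalizers. Your explicit remark that maximality is what forces the image of $\Ds(U)$ to equal $\Diags(\Gamma')$ (rather than merely being contained in it) is a point the paper passes over more quickly, and it is well taken.
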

\begin{proof}
Since $\Gamma$ is fine, $\Diags(\Gamma)$ is a maximal quasitorus of $\Auts(\cA)$. 
Note that $\Diags(\Gamma)$ is the image under $\eta_\Gamma$ of $\Ds(U)$. 
The image under $\phi$
of $\Diags(\Gamma)$ is then a maximal quasitorus of $\Auts(\cB)$. It follows that $\Gamma'$ is fine
with universal group $U$, and that the image of $\Diags(\Gamma)$ under $\phi$, that is, the
the image of $\Ds(U)$ under $\eta_\Gamma'=\phi\eta_\Gamma$, is
$\Diags(\Gamma')$. Now Theorem \ref{th:CentNorm} implies at once that $\phi$ restricts to isomorphisms $\Stabs(\Gamma)\simeq\Stabs(\Gamma')$
and $\Auts(\Gamma)\simeq\Auts(\Gamma')$ and, as a consequence, it induces an isomorphism
$\Wsf(\Gamma)\simeq\Wsf(\Gamma')$.
\end{proof}


\bigskip

\end{document}